\newcommand{\average}{-\!\!\!\!\!\!\int}
\newcommand{\comment}[1]{}
\newcommand{\diam}{\mathop{\rm diam}\nolimits}
\newcommand{\dist}{\mathop{\rm dist}\nolimits}
\renewcommand{\div}{{\mathop{\rm\,div\,}\nolimits}}
\providecommand{\qed}{\vrule height 6pt depth 0pt width 3 pt}
\newcommand{\reals}{{\bf R}}
\newcommand{\BibTeX}{{\rm B\kern-.05em{\sc i\kern-.025em b}\kern-.08em     
    T\kern-.1667em\lower.7ex\hbox{E}\kern-.125emX}}
\providecommand\note{}
\renewcommand\note[1]{}
\renewcommand\marginpar[1]{}
\newcommand{\sobolev}[2]{  W ^ {{#2},{#1}}}
\newcommand{\ball}[2]{B_{#2}(#1)}
\newcommand{\locdom}[2]{{\Omega_{#2}(#1)}}
\newcommand{\sball}[2]{{\Delta_{#2}(#1)}}
\newenvironment{example}[1][Example]{\begin{trivlist}\item[\hskip \labelsep
{\it #1. }]}{  \goodbreak \end{trivlist}}   
\numberwithin{equation}{section} 
\newtheorem{theorem}[equation]{Theorem}
\newtheorem{lemma}[equation]{Lemma}
\begin{document}
\title{Estimates for the $L^q$-mixed problem in $C^{1,1}$-domains}

\author{
 R.M.~Brown\footnote{
Russell Brown   is  partially supported by  grants from the Simons
Foundation (\#195075,\#422756).
} \\ Department of Mathematics\\ University of Kentucky \\
Lexington, KY 40506-0027, USA
\and
L.D. Croyle
\\ Department of Mathematics\\ Baldwin-Wallace University\\
Berea, OH 44017
}

\date{}

\maketitle

\begin{abstract}
We consider the $L^q$-mixed problem in domains in $\reals ^n$ with $C^ {1,1}$-boundary. We
assume that the boundary  between the sets where we specify Neumann and
Dirichlet data is Lipschitz. With these assumptions, we show that we
may solve the $L^q$-mixed problem for $q$ in the range $1<q <
n/(n-1)$. 
\end{abstract}

\section{Introduction}\label{Introduction}
The goal of this note is to establish a regularity result for the
$L^q$-mixed problem. Our work builds on an earlier result of Ott and Brown
\cite{MR3042705}
which establishes existence and uniqueness for the
$L^q$-mixed problem for $ q $ near 1. 
In this paper, we consider a
more restrictive class of domains than was considered in Ott and
Brown, but we are able to give an explicit range of exponents $q$ for
which we can solve the mixed problem. This range is easily seen to be
sharp in two dimensions.
The new ingredient in this work compared to Ott and Brown's work is  a
result of
Savar\'e \cite{GS:1997}. Savar\'e's result is a regularity result for
solutions of 
the mixed problem in a Besov space. We use his result to prove a reverse H\"older
inequality. This inequality then feeds into the machinery of  Ott and
Brown to obtain our main theorem.

We let $ \Omega \subset \reals ^n$
be a bounded open set and suppose the boundary $ \partial \Omega$ is
partitioned into two sets $D$ and $N$. We assume that we are given
functions $ f_D$ and $f_N$ defined on $D$ and $N$, respectively. 
By the $L^q$-mixed problem, we mean the problem of finding a function
$u$ which
satisfies
\begin{equation}
  \label{MP}
  \left \{   \begin{aligned}
&    \Delta u = 0, \qquad &&\mbox{in } \Omega \\
&    u = f_D , \qquad && \mbox {on } D \\
 &   \frac { \partial u } { \partial \nu } = f_N, \qquad &&\mbox {on }
    N \\
&\nabla u ^* \in L^ q( \partial \Omega)   \qquad & \   
  \end{aligned}
  \right.
\end{equation}
Our assumptions on the domain are below. In particular, our hypotheses
will imply that 
 the surface measure on $ \partial \Omega$ is
defined. We use $ \nabla u^*$ to denote the non-tangential maximal
function and this will be defined in section \ref{Prelim}  below.

Our main result is  the following theorem. See section \ref{Prelim}
for definitions of several of the objects appearing in the theorem. 

\begin{theorem}\label{Main} Suppose that $ \Omega\subset \reals ^n$,
  $N$, and $D$ is a  standard
  $C^{1,1}$-domain for the mixed problem  as defined in section
  \ref{Prelim}. Suppose that $ q \in (1,
  n/(n-1))$,  that $f_N$ is  in $L^q(N)$ and $f_D$ is in the
  Sobolev space $\sobolev  q1(D)$. Under these assumptions there exists a
  unique solution of the $L^q$-mixed problem for the Laplacian \eqref{MP}
  and the solution  satisfies the estimate
  $$
  \| \nabla u ^* \|_{ L^q ( \partial \Omega)}  \leq C   
[    \| f_N\|_{ L^q(N)} + \| f_D \| _{\sobolev q1 (D)}].
  $$
\end{theorem}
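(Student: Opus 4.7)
The plan is to follow the strategy indicated in the introduction: use Savar\'e's Besov regularity result to establish a reverse H\"older inequality for $\nabla u^*$ on $\partial \Omega$, and then pipe this inequality through the Ott--Brown atomic machinery to produce the $L^q$-estimate for the full range $1 < q < n/(n-1)$.

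The starting point is Ott--Brown \cite{MR3042705}, which already supplies existence, uniqueness, and an $L^q$-estimate for $q$ in a small interval $(1, 1+\epsilon)$, together with an atomic framework: one may decompose the boundary data $(f_N, f_D)$ into atoms supported on surface balls $\Delta$, solve an associated $L^2$-mixed problem for each atom using Brown's earlier $L^2$-theory, and then reassemble. For atoms whose support avoids the Lipschitz collision $\partial D \cap \partial N$, standard elliptic regularity for the pure Dirichlet or pure Neumann problem already gives reverse H\"older bounds for $\nabla u^*$; the new task is to handle atoms supported near $\partial D \cap \partial N$.

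For such atoms I would invoke Savar\'e's theorem \cite{GS:1997}: because $\partial \Omega$ is $C^{1,1}$ and the collision set is Lipschitz, the corresponding solution $u$ gains Besov regularity on the scale of $B^{3/2}_{2,\infty}(\Omega)$, quantitatively controlled by the $L^2$ norm of the data. Combining a Besov trace theorem with Sobolev embedding on the $(n-1)$-dimensional boundary upgrades this interior regularity into an $L^p$ bound for $\nabla u^*$ on $\Delta$ with $p$ strictly larger than the starting exponent; the critical value of the resulting chain of embeddings is $n/(n-1)$, matching the range in the theorem. After rescaling, this yields a reverse H\"older inequality
$$
\Bigl(\frac{1}{|\Delta|}\int_{\Delta}(\nabla u^*)^{p}\,dS\Bigr)^{1/p} \;\leq\; C\Bigl(\frac{1}{|C\Delta|}\int_{C\Delta}(\nabla u^*)^{q_0}\,dS\Bigr)^{1/q_0},
$$
uniformly in surface balls $\Delta$. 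Gehring's self-improvement lemma then bumps $p$, and feeding the resulting higher-integrability back into the Ott--Brown atomic summation produces the claimed global $L^q$-bound for $1 < q < n/(n-1)$. Uniqueness is inherited from the $L^2$-uniqueness established in Brown's earlier work.

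The principal obstacle will be the second step: extracting the reverse H\"older inequality with a sharp exponent from Savar\'e's theorem. This requires localizing the global Besov estimate to the scale of a single surface ball straddling $\partial D \cap \partial N$, using the $C^{1,1}$-regularity of $\partial \Omega$ to flatten the boundary without losing regularity, and carrying out the Besov--Sobolev embedding cleanly at the endpoint exponent. It is precisely here that the $C^{1,1}$-hypothesis enters essentially, which is why the approach does not extend to the more general Lipschitz domains considered in Ott--Brown.
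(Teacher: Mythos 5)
Your first half is on target: the paper does localize near $\partial D\cap\partial N$, flatten the boundary with a $C^{1,1}$-diffeomorphism, apply Savar\'e's $B^{1/2}_{2,\infty}$-regularity, and use a Besov--Sobolev embedding to get a reverse H\"older inequality whose exponent limit produces the range $q<n/(n-1)$. But the second half of your plan has two genuine problems. First, the reverse H\"older inequality you write down is for the non-tangential maximal function of the \emph{actual} solution $u$, uniformly over all surface balls. That inequality is false as stated: $u$ has Neumann data that is merely $L^q$ on every ball, and no self-improvement of integrability can hold uniformly for such data. The inequality that is both true and needed is for \emph{local} solutions whose Dirichlet and Neumann data vanish on the doubled ball $B_{2r}(x)$ (Theorem 3.2 and Lemma 4.5 of the paper); the vanishing of the data is exactly what Savar\'e's theorem exploits, and it is why the paper must decompose $u=v+w$ on each cube $Q$, with $v$ carrying the data $\chi_{2Q}f_N$ and $w$ carrying the rest, and prove the reverse H\"older bound only for $w$.

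Second, Gehring's lemma cannot close the argument. Gehring gives an unquantified improvement $p\to p+\epsilon$ and would never reach the explicit endpoint $n/(n-1)$; moreover the paper is not in a Gehring situation at all, since (as above) there is no uniform reverse H\"older inequality for $u$ itself. The mechanism the paper actually uses is the Caffarelli--Peral real-variable argument in Shen's formulation (Theorem 4.1 here): given the known $L^{q_0}$ solvability from Ott--Brown for some $q_0>1$, plus the bound \eqref{ZS1} for $M(\nabla v^*)$ in $L^{q_0}$ and the reverse H\"older bound \eqref{ZS2} for $M(\nabla w^*)$ with exponent $q_1$ arbitrarily close to $n/(n-1)$, Shen's theorem delivers the $L^q$ estimate for every $q_0<q<q_1$. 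This is also not the atomic decomposition of Ott--Brown (that device produces the $H^1$ and near-$1$ results you start from, not the extension to larger $q$). There is additionally a nontrivial intermediate step you pass over: Savar\'e's theorem gives a \emph{solid} estimate $(\average_{\locdom xr}|\nabla u|^p)^{1/p}\leq C\average_{\locdom x{2r}}|\nabla u|$ for $p<2n/(n-1)$, and converting this to a boundary estimate on $\sball xr$ with exponent $q<n/(n-1)$ requires the weighted inequality of Ott--Brown (their Lemma 4.9) together with integrability estimates for powers of the distance function, not a Besov trace theorem (the trace at smoothness $1/2$ is precisely the critical, ill-defined case). Your reduction of $f_D$ to zero and the inheritance of uniqueness from the $L^{q_0}$ theory are fine.
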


We next recall a well-known example that shows that, at least in two
  dimensions, the range of exponents in Theorem \ref{Main} is sharp.
\begin{example}  
  We
let $ \Omega \subset \{ (x_1, x_2) : x _2 >0 \}$ be a  smooth domain
with $ [-1,1] \times \{ 0 \} \subset \partial \Omega$. We define $N$
and $D$ by $D= [0,1] \times \{0\}$ and $N= \partial \Omega \setminus
D$. 
Consider the function $u$ defined in polar  coordinates by
\begin{equation} \label{Simple} u(r,
\theta) = r ^ { 1/2 } \cos( \theta /2). 
\end{equation}
The function $u$ will solve the mixed problem in $ \Omega$ with $ f_N$
bounded and $f_D =0$. However,  we have $ |\nabla u (x) | = c|x|^ {
  -1/2} $ and thus we have $ \nabla u ^ * \in L^ q( \partial \Omega)$
precisely if $ q < 2$.\hfill \qed
\end{example}

\section{Definitions and preliminary results} \label{Prelim}

\note{{\color{blue}Changed to require  dissection condition for $x$ in $ \Lambda$.}}
In this section we give the main definitions used in the statement and
proof of our main result.
We begin by defining the domains we will use.  We assume that $
\Omega \subset \reals ^n$ is a bounded, connected,  and open set and that
the boundary is $C^{1,1}$. This will mean that there exists $ r _0$
and $M$ so that for each $ x \in \partial \Omega$ we may find
coordinates $ ( y', y _n )=(y_1, y'',y_n)  \in \reals \times \reals ^ { n-2} \times \reals$  and a
$C^ { 1,1}$-function $\phi : \reals ^ { n-1} \rightarrow  \reals $ so that we have
\begin{equation} \label{Coords}
  \begin{aligned}
    \Omega \cap B_ { 100 r_0 } (x)
   & = \{ (y', y _n ) : y _ n > \phi ( y') \} \cap B _ { 100 r_0}
    (x) \\
\partial \Omega \cap B_ { 100 r_0 } (x)
&= \{ (y', y _n ) : y _ n = \phi ( y') \} \cap B _ { 100 r_0} (x).
  \end{aligned}
\end{equation}
Here, we are using $ B_r(x)$ to denote a ball with radius $r$ and
center $x$. 
  To prove our regularity result, we will need to impose conditions on
  the boundary between $D$ and $N$. We let $ \Lambda $ denote the
  boundary of $D$ relative to $ \partial \Omega$ and for each $ x \in
  \Lambda$, 
  we assume that with the coordinate system and $ \phi$ as above, we
  also have 
  a Lipschitz function  $ \psi : \reals ^ {n-2} \rightarrow \reals $ 
so that 
  \begin{equation}\label{Bcoords}
    \begin{aligned}
      D \cap B_ {100 r_0 }( x) =
      \{ (y _1, y '', y_n ) \in \reals \times 
      \reals ^ {n-2} \times \reals : y_ n = \phi ( y'), y_1  \geq
      \psi (y'') \} \cap B _ {100r_0}(x) \\
      N \cap B_ {100 r_0 }( x) =
      \{ ( y _1, y '', y_n ) \in \reals \times 
      \reals ^ {n-2} \times \reals : y_ n = \phi (y'), y_1  <
      \psi ( y'') \}\cap B _{100r_0}(x).
    \end{aligned}
  \end{equation}
In both \eqref{Coords} and \eqref{Bcoords}, we require that the
coordinate system be a rigid motion of the standard coordinates on $
\reals ^n$ and that the functions $ \phi$ and $\psi$ satisfy the
conditions
\begin{equation} \label{FunBounds}
 \| \nabla \phi \| _ { L^ \infty ( \reals ^ { n-1})} + 
r_0 \| \nabla ^2 \phi \|_{L^ \infty ( \reals ^ { n-1})}
\leq M, \qquad  \| \nabla  \psi \|_{L^ \infty ( \reals ^ {n-2})} \leq
M.
\end{equation}
We will call $ \Omega $, $N$, and $D$ a {\em standard $C^{1,1}$-domain for
the mixed problem}.  We will use $r_0$ as a characteristic length for
the domain.   Our goal is provide
results which are scale-invariant and this is the reason for the
appearance of $r_0$ in (\ref{FunBounds}). 

\note{{\color{blue} Changed Sobolev spaces to $W$ to be consistent. }}

Next, we define Sobolev spaces on $ \Omega$. For $ 1\leq p <
\infty$, we let $\sobolev p 1( \Omega)$ be the standard Sobolev space of
functions
with one derivative in $L^p( \Omega)$. For $ D \subset \partial \Omega$
and $p < \infty$, we define $W^ {1,p}_D( \Omega)$ to be the closure of
$ C^ \infty _D ( \bar  \Omega)$ in $ W^ { 1,p }( \Omega)$. Here, $ C^
\infty _D( \bar \Omega)$ denotes the collection of functions in $ C^
\infty ( \bar \Omega)$ 
which vanish on a neighborhood of $ \bar D$, the closure of $D$.
In order to make our estimates scale-invariant, we will define the
norm on  $W^ {
  1,p} _D(\Omega) $ as
$$
\|u \| _ { W^ { 1,p} ( \Omega) } = \left( \int _ \Omega |\nabla u | ^ p + r_0
^ { -p } |u|^p \, dx\right) ^ { 1/p}.
$$
We will use $ W^ {- 1,2} _D( \Omega)$ to denote the dual  of the space
$W^ { 1,2}_D( \Omega)$. 
\note{ Clarify why we need $ \bar D$. 

}

We will use  $\sobolev q1(D)$ to denote  the Sobolev space of functions in
$L^q(D)$  which also have one derivative in $L^q(D)$.  This space will
be defined as the restriction of $\sobolev q 1 (\partial \Omega)$ to
the closed set $ D$. 
In order to make our estimates scale
correctly, we will use the norm
$$
\|u \| _{ L^q_1(D)} = \left( \int_D |\nabla_{tan}u |^q + r_0^ { -q} |u|^q \,
d\sigma\right ) ^ { 1/q}.
$$
See \cite[p.~1337]{MR3042705} or \cite[p.~580]{GV:1984} for a
definition of this space and the tangential gradient, $ \nabla
_{tan}$. 

We let $ W^ { 1/2, 2} ( \partial \Omega)$ denote the image of 
$W^{1,2} ( \Omega)$ under the trace map. Similarly, $W^ {1/2, 2}_D(
\partial \Omega)$ will denote  the  image of $ W ^ { 1,2 } _D (
\Omega)$ under the trace map. We denote the dual of $ W^ { 1/2,2} _D(
\partial \Omega)$  by $ W^ { -1/2,2 } _D( \partial \Omega)$.
The space $ W^ { -1/2, 2 }_D( \partial \Omega)$ is a natural space for
Neumann data for the weak  mixed problem. The Dirichlet data in the
mixed problem will be the  restriction to $D$ of an  element in  $W^ { 1/2, 2}( \partial
\Omega)$.

\note{ Need to specify norms on trace spaces/note hilbert structure.

{\color{blue} Corrected space for Dirichlet data. }}

While our main result is for the Laplacian, at one point in the
argument of Savar\'e it is convenient to flatten the boundary using a
$ C^ { 1,1 }$-diffeomorphism. Pulling back a harmonic function in $ \Omega$ to
a domain with a flat boundary will produce a function which solves an
elliptic equation with Lipschitz coefficients.

\note{ {\color{blue}Domain of $x$ and $y$ may vary, hard to specify.

  We can state problem for a general domain, rather than specializing
  to $ \reals ^n_+$.}}
We let $L$ denote an operator $L= \div A \nabla$ where the
coefficient matrix $A$ is symmetric,  Lipschitz, and elliptic.  We will have the
quantitative assumptions
\begin{align}
  &  |A(x) -A(y) | \leq M |x-y|, \label{Lipschitz} \\
  & M^{-1} |\xi|^2 \leq A(x) \xi \cdot \xi \leq M|\xi|^2 ,
  \qquad \xi \in \reals ^ n .  \label{Elliptic} 
\end{align}
We will consider the problem
\begin{equation} \label{MP-A}
\left \{
\begin{aligned} & \div A \nabla u = F, \qquad & \mbox{in } \Omega \\
  & A \nabla u \cdot \nu = f_N, \qquad& \mbox{on } N \\
  & u = f _D, \qquad & \mbox{on } D. 
\end{aligned}
\right.
\end{equation}
We will generally work with the weak formulation of the problem
\eqref{MP-A} which is 
\begin{equation} \label{WMP}
  \left\{   \begin{aligned}
&\int_\Omega A\nabla u \cdot \nabla \phi \, dx + \langle F, \phi
\rangle _\Omega = \langle f_N, \phi \rangle _{\partial \Omega}  ,
\qquad \phi \in W^ {1,2} _D( \Omega) \\
&u-f_D \in W^ {1,2} _D ( \Omega).
  \end{aligned}
  \right.
\end{equation}
\note{ Need to make sure  that we have the right space for $F$. }
In the statement (\ref{WMP}), we are using $f_D$  to denote a function
in $W^{1,2}(\Omega)$ as well as the boundary values in
$W^{1/2,2}(\partial \Omega)$. 
The forcing term $F$ will lie in $W^ { -1,2}_D( \Omega)$
and the Neumann data will come from $ W ^ { -1/2,2}_D(
\partial \Omega)$.  We use $ \langle \cdot , \cdot \rangle _ \Omega$
to denote the pairing between $ W^ { -1,2}_D( \Omega)\times W ^ {1,2
}_D( \Omega) \rightarrow \reals$ and $\langle \cdot, \cdot \rangle_{ \partial  \Omega}$
for the pairing $ W ^ { -1/2,2}_D( \partial \Omega) \times W^ { 1/2,
  2}_D( \partial \Omega) \rightarrow \reals$.  We observe that we have the Sobolev embedding of $W^ {
  1/2, 2}_D(\partial \Omega)$ into $L^ { 2( n-1)/(n-2)}( \partial
\Omega)$ when $n\geq 3$. Thus,  passing to the dual implies $ L^ p (N)
\subset W^ { -1/2, 2 }_D( \partial \Omega)$ if $p \geq 2 ( n-1) /n$.
When $n=2$, we have that $ 2(n-1)/(n-2)= \infty$ and the embedding of $W^ { 1/2, 2}_D( \partial \Omega)$ into
$L^\infty ( \partial \Omega)$ fails. However, we do  have the
embedding  $W^
{1/2,2}_D(\partial \Omega) \subset L^p( \partial \Omega)$  for $
p < \infty$ and thus 
  we still obtain the
embedding  $L^ p (N) \subset W^ { -1/2,2}_D(\partial \Omega)$ for
$p>1$.

To estimate solutions of the mixed problem when $f_N$  comes from
$L^p(N)$, we will use the non-tangential maximal function. For a
function $u$ on $ \Omega$ taking values in  $ \reals ^d$ for some $d$, we define the non-tangential maximal
function $u ^ *$ by
$$
u^* (x) = \sup_{ y \in \Gamma (x)} |u(y)|.
$$
In this definition, $ \Gamma (x)$ is a non-tangential approach region
defined by
$$
\Gamma (x ) = \{ y : |x-y | < (1+\alpha ) \dist(y, \partial \Omega)\}
, \qquad x \in \partial \Omega 
$$
where $ \alpha > 0$ is fixed.
While $ u^*$ depends on $ \alpha$, the $L^p$-norms of non-tangential
maximal functions defined using different values of $\alpha$ will be
comparable. Thus we suppress the value of $ \alpha $ in our notation.

Our main argument will consider a number of local estimates. For these
estimates, we will use surface balls $\sball x r  = B_r (x) \cap
\partial \Omega$.  We will also need local domains $\locdom x r  =
\Omega \cap B_r (x)$. Both objects will be defined for $ x \in
\partial \Omega$ and $ 0 < r < r_0$. 
In our estimates, we will allow the constants to depend on $M$, the
constant which appears in our definition of the domain, 
and the $L^q$-exponents. 

\note{ Have we defined the meaning of $ \approx$.
{\color{blue}
To fix Ott and Brown, we need to worry about Lipschitz domains, but I
should not bring it up here. }

Describe dependence of constants.

Fix a collection of coordinate balls. 
}

\section{A reverse H\"older inequality at the boundary}

The new ingredient in this work as compared to the earlier work of Ott
and Brown \cite{MR3042705} is a reverse H\"older inequality with a larger
range of exponents.  This inequality follows from a regularity  estimate of
Savar\'e \cite{GS:1997}  for the mixed problem. The example in 
\eqref{Simple} 
shows that Savar\'e's regularity result is sharp in the scale of Sobolev
spaces when $n=2$. In addition, it shows that the upper bound on the
exponent for 
the mixed problem is sharp as well. 
\note{ Can we show optimality of besov space result in higher
  dimensions?

{\color{orange}  Modified definition of weak solution. It seemed
  easier to extend the test function than to restrict $f_N$. }

}

To prove a local estimate, it will be helpful to have a definition of
a weak solution with boundary data specified on only part of the
boundary. Thus, if $ \Omega$ is a domain and $D$, $N \subset \partial
\Omega$ are a decomposition of the boundary, we say that $u$ solves
the local mixed problem
\begin{equation}\label{LWS} 
  \left\{
  \begin{aligned} &\div A   \nabla u =0 , \qquad &&\mbox{in }
 \locdom x r   \\
 &u = 0 , \qquad &&\mbox{on } D  \cap \locdom x r\\
 &\frac  {\partial u  }{ \partial \nu } = f_N, \qquad &&\mbox{on }
  N \cap \locdom x r 
\end{aligned}
\right.
\end{equation}
if
$$
\int _ {\Omega}  A \nabla u \cdot \nabla \phi \, dy
= \langle f_N , \phi \rangle _ { \partial \Omega} , \qquad \phi \in W^
{ 1,2 } _ {  D'} ( \locdom x r )
$$
where $  D' = (D \cap  \bar B _r (x)  \cup  ( \partial \ball x
r \cap \bar \Omega ) . $  Note that if $ \phi \in W_{D'}^ { 1,2}( \locdom x
r)$, then
Following Stein \cite[p.~152]{ES:1970}, we introduce the Besov spaces on $ \reals ^ n $, $B^
s_{ p, q}$, $ 1 \leq p,q\leq \infty$ and $ 0 <s<1$. We let
$$
\Delta _h (x) = u(x+h) -u(x), \qquad \mbox{for } x,h \in \reals^n.
$$
The norm for $B ^ s _ { p,q} $ is defined by
$$
\| u \| _ { B_{ p,q}^s} = \| u \| _ { L^ p } + \left (\int _{ \reals ^
  n } \left ( \frac { \| \Delta _h u \| _ { L^ p }} { |h|^ s }
\right)^ q \frac { dh}{ |h|^ n }\right ) ^ { 1/q}
$$
for $ q < \infty $. When $ q = \infty$, we set
$$
\| u \| _ {B^s _ { p, \infty}} = \| u \| _ { L^ p } + \sup _{ h \in
  \reals ^ n } \frac { \|\Delta _h u \| _ { L^ p } }{ |h|^ s}
$$
For a domain $ \Omega$, we let $ B^ s _ { p, q } (\Omega )$  be the
image of $ B^ s _ { p, q} ( \reals ^ n )$ under the restriction map, $
u \rightarrow u |_ \Omega$.

\note{ Need to note that $W^ { 1/2, 2}_D$ is  $B^ { 1/2} _{2,2}$.

  {\color{orange} Yes, you mentioned the problem of two uses of
    $\Delta$. I am going to take the position that this won't cause
    confusion. Sorry I neglected to respond to this earlier. }
}

We will localize a solution to a neighborhood of a point on the
boundary and apply a change of variables to obtain a problem in a
half-space. It will be an important point that we have uniform
estimates for the family of problems that arise from this
procedure. We will use $M$ in the quantitative estimates for the
inputs to these problems and obtain estimates which depend on the
problem through $M$.

We let $ A(x)$ be a symmetric matrix which satisfies the Lipschitz and
ellipticity
 conditions,   \eqref{Lipschitz} and \eqref{Elliptic}.
We let $ \psi : \reals ^ { n-2} \rightarrow  \reals  $ be a Lipschitz
function with constant $M$ and assume that
$$
\begin{aligned}
    D & = \{ ( x_1, x'', 0 ) : x_1 \geq \psi ( x'')\} \\
    N & = \{ ( x_1, x'', 0 ) : x_1 < \psi ( x'')\}.  
\end{aligned}
$$
With $A$, $N$, and $D$ as above, we consider the mixed problem on $
\reals ^ n _+$,
\begin{equation}
\label{HWMP}
\left \{ \begin{aligned}
 & \div A \nabla u -u = F, \qquad &&\mbox{in } \reals ^ n _+\\
  &  u = 0 , \qquad &&\mbox{on } D \\
&  A \nabla u \cdot \nu = f_N, \qquad &&\mbox{on } N
\end{aligned}
\right.
\end{equation}
and recall a regularity result for this problem.
\begin{theorem}[Savar\'e] \label{Savare} Let $u$ solve (\ref{HWMP}), then we
  have a constant $C$ so that 
  $$
  \|\nabla u \| _{B^ { 1/2} _{2, \infty } ( \reals ^ n _+) }
    \leq C [ \| F \|_{ L^2 ( \reals ^ n _+)}
        + \| f _N \| _{ W^ { 1/2, 2}_D( \reals ^ { n-1}) }].
        $$
        The constant in this estimate depends only on $M$ and the
        dimension $n$. 
\end{theorem}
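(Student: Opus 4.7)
The plan is to combine a straightforward Lax--Milgram energy estimate with Nirenberg-type difference quotient arguments in the tangential directions, and then to recover the half-derivative Besov regularity via real interpolation. First I would verify existence, uniqueness, and the basic energy bound: applying Lax--Milgram to the bilinear form $a(u,v)=\int_{\reals^n_+} A\nabla u\cdot\nabla v + uv\,dx$ on $W^{1,2}_D(\reals^n_+)$ produces a unique weak solution with
$$
\|u\|_{W^{1,2}(\reals^n_+)} \leq C\bigl(\|F\|_{L^2(\reals^n_+)} + \|f_N\|_{W^{1/2,2}_D(\reals^{n-1})}\bigr).
$$
This already controls the $L^2$ part of $\|\nabla u\|_{B^{1/2}_{2,\infty}}$, so the remaining task is to show $\|\Delta_h \nabla u\|_{L^2} \leq C|h|^{1/2}$ uniformly in $h\in\reals^n$.

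Next I would handle the translations one direction at a time. For translations $h$ tangent to $\partial\reals^n_+$ \emph{and} tangent to the interface $\Lambda = \{x_1=\psi(x''),\,x_n=0\}$ (essentially the $x''$--directions after a Lipschitz straightening of $\Lambda$), the shifted function $u(\cdot+h)$ satisfies a mixed problem of the same form with shifted coefficients and data; a standard difference quotient argument together with the energy estimate yields the even stronger bound $\|\Delta_h\nabla u\|_{L^2}\leq C|h|$. Similarly, the normal direction $e_n$ can be absorbed using the equation $\div A\nabla u = F + u$, which lets us solve for $\partial_n(A\nabla u\cdot e_n)$ in terms of $F$, $u$, and tangential derivatives of $\nabla u$; this gives the desired half-derivative control in $x_n$ once the tangential bounds are in hand.

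The main obstacle is the transverse tangential direction $e_1$, whose translations push points through the Dirichlet--Neumann interface $\Lambda$. Here the shifted function no longer satisfies the same boundary conditions, so the naive difference-quotient argument fails, and in fact the example \eqref{Simple} shows that full $H^{3/2}$ regularity is impossible. To recover the $B^{1/2}_{2,\infty}$ endpoint I would invoke the $K$-method of real interpolation. For each $t>0$, decompose the Neumann datum as $f_N = f_N^{(0)} + f_N^{(1)}$, where $f_N^{(0)}$ is supported away from $\Lambda$ on scale $t$ (so that the corresponding solution $v_t$ sees only a \emph{pure} Neumann/Dirichlet problem and enjoys $H^{3/2}$ regularity by classical results) and $f_N^{(1)}$ is the remainder supported in a $t$-neighborhood of $\Lambda$, for which the energy estimate gives $H^1$-control. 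A quantitative version of this splitting, using the Lipschitz structure of $\psi$ to produce compatible truncations, bounds the $K$-functional $K(t,\nabla u; L^2, H^1)$ by $Ct^{1/2}$ times the data norm, which is precisely the condition $\nabla u \in B^{1/2}_{2,\infty}$.

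Finally, combining the estimates for tangential translations along $\Lambda$, for $e_1$--translations via the interpolation splitting, and for the normal direction via the equation, I would sum these contributions for a general $h\in\reals^n$ and conclude the Besov bound. The dependence of constants on $M$ enters only through the ellipticity/Lipschitz constants of $A$ (via Lax--Milgram and classical elliptic regularity for the non-mixed auxiliary problems) and through the Lipschitz constant of $\psi$ (via the truncations near $\Lambda$), which yields the claimed uniform estimate.
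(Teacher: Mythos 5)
There are genuine gaps in both of the two mechanisms you propose for the tangential directions, and neither matches what the paper actually does (the paper does not reprove the theorem; it cites Savar\'e \cite{GS:1997} and observes that the only modification needed is that the Lipschitz condition on $\psi$ gives $h+D\subset D$ for all $h$ in an \emph{open cone} in $\reals^{n-1}\times\{0\}$, so that Savar\'e's one-sided translation argument on p.~882 goes through; details are in Croyle's dissertation \cite{MR3564069}). First gap: your claim that translations in the $x''$--directions give the strong bound $\|\Delta_h\nabla u\|_{L^2}\leq C|h|$ is unfounded. Since $\Lambda$ is the graph of a general Lipschitz $\psi(x'')$, no coordinate direction leaves $D$ and $N$ \emph{exactly} invariant, so the shifted function does not satisfy the same boundary conditions in the $x''$--directions any more than in the $e_1$--direction; and the ``Lipschitz straightening of $\Lambda$'' you invoke to fix this would turn the Lipschitz coefficients $A$ into merely bounded measurable ones, destroying the regularity theory you then want to apply. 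Second gap: in your $K$-functional splitting, the assertion that the solution $v_t$ with Neumann data supported at distance $\geq t$ from $\Lambda$ ``sees only a pure Neumann/Dirichlet problem and enjoys $H^{3/2}$ regularity'' is false. The interface singularity is a property of the homogeneous mixed boundary conditions, not of the support of the data: in two dimensions the singular function $r^{1/2}\cos(\theta/2)$ appears with generically nonzero coefficient for data supported arbitrarily far from the interface, and $\nabla u\sim r^{-1/2}$ lies in $B^{1/2}_{2,\infty}$ but not in $H^{1/2}$ near the interface, so $v_t\notin H^{3/2}$. Hence your bound $K(t,\nabla u;L^2,H^1)\leq Ct^{1/2}$ does not follow from this decomposition.

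The ingredient you are missing is precisely the one the paper highlights: because $\psi$ is Lipschitz with constant $M$, there is an open cone $\Gamma\subset\reals^{n-1}\times\{0\}$ (spanning $\reals^{n-1}\times\{0\}$) such that $h+D\subset D$ for every $h\in\Gamma$. This is only a \emph{one-sided} inclusion, so the classical two-sided difference quotient argument is unavailable; instead, Savar\'e exploits the variational characterization of $u$ as the minimizer of a strongly convex functional over the affine subspace determined by the constraint $u=0$ on $D$. The inclusion $h+D\subset D$ guarantees that the translate $u(\cdot+h)$ is an \emph{admissible competitor}, its energy exceeds the minimum by $O(|h|)$ (this is where $F\in L^2$ and $f_N\in W^{1/2,2}$ enter), and strong convexity converts this into $\|\Delta_h u\|_{W^{1,2}}^2\leq C|h|$, i.e.\ $\|\Delta_h\nabla u\|_{L^2}\leq C|h|^{1/2}$ uniformly for $h\in\Gamma$. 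Since the cone contains a basis of $\reals^{n-1}\times\{0\}$, this yields the tangential part of the $B^{1/2}_{2,\infty}$ bound in all directions at once --- there is no need to treat directions ``along'' and ``across'' $\Lambda$ separately --- and the normal direction is then recovered from the equation, as you correctly indicate in your final step.
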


\note{{\color{orange} Sentence rewritten.}}
Note that in this theorem we are assuming that $ f_N \in W^ { 1/2,
  2}_D( \reals ^ {n-1})$. The  function $f_N$ defines an element of $ W^ { -1/2,
  2} _D( \reals ^ { n-1})$ by
$$
\phi \rightarrow \int _N  f_n \phi \, d \sigma.
$$
Theorem \ref{Savare} is a small extension of the result stated by Savar\'e
\cite{GS:1997}. The difference is that we allow a more general
separation between $D$ and $N$. Our condition that $ \psi$ is
Lipschitz implies that we have that $ h+ D \subset D$ for $h$ in
an open cone in $ \reals ^ {n-1}\times \{0\}$. Since a cone in $
\reals ^ { n-1} \times \{ 0 \}$, contains a basis of $ \reals ^ { n-1}
\times \{0 \}$ we are able to carry through the argument on page 882
of Savar\'e's work \cite{GS:1997}. The dissertation of Croyle 
 \cite[p.~16]{MR3564069} provides more details.

We are now ready to state the reverse H\"older inequality.
\begin{theorem} \label{RHI} Suppose $ \Omega$, $N$, and $D$ is a standard $C^ {
    1,1}$-domain for the mixed problem. Fix $0< r < r_0$  and let $u$ be
  a solution to
  $$
  \left \{
  \begin{aligned}&\Delta u =0 , \qquad & & \mbox{in } \Omega _ { 2r } (x)
    \\
 &   u = 0 , \qquad && \mbox{on } D \cap \bar \Omega _ { 2r } (x) \\
&    \frac { \partial u } { \partial \nu } = 0 , \qquad &&  \mbox{on } N
    \cap \bar \Omega _ { 2r } (x)
  \end{aligned}
  \right .
  $$
  as in \eqref{LWS}. 

  Then for $1< p < 2n / ( n-1) $, we have
  $$
  \left ( \average _ { \Omega _ { r } (x)} |\nabla u |^ p \,dy \right)
    ^ { 1/ p } \leq C \average _ { \Omega _ { 2r} (x) }
    |\nabla u | \, dy . 
    $$
The constant in this estimate depends on $M$ and $p$.     
\end{theorem}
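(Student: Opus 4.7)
The plan is to apply Savaré's Besov regularity result (Theorem~\ref{Savare}) to a suitable localization of $u$, use a Besov embedding to pass from $B^{1/2}_{2,\infty}$ to $L^p$ with $p<2n/(n-1)$, and then reduce the right-hand side to an $L^1$-average of $|\nabla u|$ by a Caccioppoli--Sobolev--Poincar\'e iteration.

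First I would flatten the boundary and localize. Using the chart (\ref{Coords}) together with the Lipschitz graph description (\ref{Bcoords}), choose a $C^{1,1}$-diffeomorphism carrying $\Omega_{2r}(x)$ into a neighborhood of $0$ in $\reals^n_+$, sending $\partial\Omega\cap B_{2r}(x)$ into $\partial\reals^n_+$ and the interface $\Lambda$ to a Lipschitz graph in $\reals^{n-1}\times\{0\}$. The pullback $\tilde u$ then solves $\div A\nabla\tilde u=0$ with $A$ symmetric, Lipschitz, and elliptic, the constants in (\ref{Lipschitz})--(\ref{Elliptic}) depending only on $M$. Take a smooth cutoff $\eta$ in the new coordinates, depending only on the tangential variables, identically one on the pullback of $B_r(x)$ and supported inside the pullback of $B_{3r/2}(x)$. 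Because $\eta$ does not depend on the normal variable, $A\nabla(\eta\tilde u)\cdot\nu=\eta A\nabla\tilde u\cdot\nu=0$ on $N$, so $v:=\eta\tilde u$ solves (\ref{HWMP}) with $f_N=0$ and forcing $F=2A\nabla\tilde u\cdot\nabla\eta+\tilde u\,\div(A\nabla\eta)-v$ supported in the annulus $\{\eta<1\}$.

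Applying Theorem~\ref{Savare} to $v$ and using $|\nabla\eta|\lesssim r^{-1}$, $|\nabla^2\eta|\lesssim r^{-2}$ yields
$$\|\nabla v\|_{B^{1/2}_{2,\infty}(\reals^n_+)}\leq C\|F\|_{L^2(\reals^n_+)}\leq Cr^{-1}\|\nabla\tilde u\|_{L^2(\Omega_{3r/2}(x))}+Cr^{-2}\|\tilde u\|_{L^2(\Omega_{3r/2}(x))}.$$
The Besov embedding $B^{1/2}_{2,\infty}(\reals^n_+)\hookrightarrow L^p(\reals^n_+)$, valid precisely for $1\leq p<2n/(n-1)$ since $1/2-1/(2n)=(n-1)/(2n)$, then controls $\|\nabla u\|_{L^p(\Omega_r(x))}$ by the right-hand side. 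After undoing the diffeomorphism (whose Jacobian is bounded depending only on $M$) and normalizing by $|\Omega_r(x)|\approx r^n$, we arrive at the intermediate reverse H\"older inequality
$$\left(\average_{\Omega_r(x)}|\nabla u|^p\,dy\right)^{1/p}\leq C\left(\average_{\Omega_{3r/2}(x)}|\nabla u|^2\,dy\right)^{1/2}+Cr^{-1}\left(\average_{\Omega_{3r/2}(x)}|u|^2\,dy\right)^{1/2}.$$

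The final step is to replace the two $L^2$-averages on the right by an $L^1$-average of $|\nabla u|$ on $\Omega_{2r}(x)$. For this I would combine the Caccioppoli estimate $\|\nabla u\|_{L^2(\Omega_\rho(x))}\leq C\rho^{-1}\|u-c\|_{L^2(\Omega_{2\rho}(x))}$ with the Sobolev--Poincar\'e inequality $\|u-c\|_{L^2(\Omega_{2\rho}(x))}\leq C\rho\|\nabla u\|_{L^{2n/(n+2)}(\Omega_{2\rho}(x))}$, taking $c=0$ when the surface measure of $D\cap\sball{x}{2\rho}$ is comparable to that of $\sball{x}{2\rho}$ (which the Lipschitz structure of $\Lambda$ guarantees whenever $x$ lies in or near $\overline D$) and taking $c$ to be a mean of $u$ on $\Omega_{2\rho}(x)$ otherwise. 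Iterating this pair of inequalities over a geometric chain of radii between $r$ and $2r$ a bounded (dimension-dependent) number of times pushes the right-hand exponent below $1$ and so is absorbed into an $L^1$-average. The principal obstacle I expect is the uniform handling of the Poincar\'e constants and the choice of $c$ as the surface balls transition between Dirichlet-dominated and Neumann-dominated regimes across $\Lambda$; this is where the Lipschitz regularity of $\psi$ in (\ref{Bcoords}) is essential.
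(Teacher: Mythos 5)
Your overall architecture---flatten the boundary, cut off, apply Savar\'e's Theorem \ref{Savare}, embed $B^{1/2}_{2,\infty}$ into $L^p$ for $p<2n/(n-1)$, change variables back, and finally lower the exponent on the right-hand side from $2$ to $1$ by a Giaquinta-type self-improvement---is exactly the paper's. But the localization step has two genuine gaps. First, the claim that a cutoff $\eta$ depending only on the tangential variables gives $A\nabla(\eta\tilde u)\cdot\nu=\eta A\nabla\tilde u\cdot\nu$ on $\partial\reals^n_+$ is false: the product rule produces the additional boundary term $\tilde u\,A\nabla\eta\cdot\nu$, and for the pull-back of the Laplacian under $\Phi(y)=(y',\phi(y')+y_n)$ one computes $A_{nj}=-\partial_j\phi$ for $j<n$, so $A\nabla\eta\cdot\nu=\nabla'\phi\cdot\nabla'\eta\neq 0$ in general; the conormal direction $A\nu$ is not parallel to $e_n$. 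Hence the localized function does \emph{not} solve \eqref{HWMP} with $f_N=0$, and the nonzero Neumann datum --- which in the paper has the form $g_N=((u-\bar u)\,\partial\eta/\partial\nu\,\sqrt{1+|\nabla\phi|^2})\circ\Phi$ --- must be estimated in the trace space $W^{1/2,2}(\reals^{n-1})$ via the trace theorem and a Poincar\'e inequality. That estimate is a genuine step of the proof which your argument skips by an incorrect cancellation.

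Second, you localize $\eta\tilde u$ rather than $\eta(\tilde u-\bar u)$, so your intermediate inequality carries the term $r^{-1}\bigl(\average_{\Omega_{3r/2}(x)}|u|^2\,dy\bigr)^{1/2}$. In the purely Neumann case $D\cap B_{2r}(x)=\emptyset$ this term is useless: replacing $u$ by $u+\lambda$ for a large constant $\lambda$ leaves $\average_{\Omega_{2r}(x)}|\nabla u|\,dy$ unchanged while $r^{-1}\|u\|_{L^2}$ blows up, so no chain of inequalities can lead from your right-hand side to the one in the theorem. Your proposed repair via Caccioppoli and Sobolev--Poincar\'e only controls $\|u-c\|_{L^2}$, which is not the quantity appearing in your estimate. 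The correct fix is the paper's: subtract a constant \emph{before} cutting off, setting $v=(\eta(u-\bar u))\circ\Phi$ with $\bar u=0$ when $B_{2r}(x)\cap D\neq\emptyset$ (so the Dirichlet condition is preserved) and $\bar u=\average_{\Omega_{2r}(x)}u\,dy$ otherwise, and then invoke the mixed-boundary Poincar\'e inequality \eqref{PI} to bound both the forcing term and the Neumann datum by $\|\nabla u\|_{L^2}$. With these two repairs your outline becomes the paper's proof.
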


\begin{proof} We may rescale and translate the coordinates so that $
  r=1$ and $x = 0$. We choose a cutoff function $ \eta$ which is
  supported in $ B_2(0)$ and is equal to one on $B_1(0)$. We assume
  that $ \partial \Omega = \{ y : y _n = \phi(y')\}$ in a neighborhood
  of $ \bar B _2 (0)$ and let $ \Phi (y) = ( y', \phi (y') + y _ n )$
  map a neighborhood of 0 in $ \reals ^ n _+$ onto $ B _ 2 (0) \cap
  \Omega = \Omega _ 2 (0)$. Furthermore, we set $ \tilde N = ( \reals ^
         { n-1} \times \{ 0 \}) \cap \{ y : y _1 < \psi (
         y'') \}$ and $ \tilde D =  (\reals ^ { n-1 } \times \{ 0 \})
         \cap \{ y : y_1 \geq \psi ( y'') \}$.
         We define $v$ by 
$     v(y) = (\eta  ( u- \bar u) ) \circ \Phi (y)$
         where
         $$\bar u      = \left \{ \begin{aligned}& 0 , \qquad  && \mbox{if }
           B_2 (0) \cap D \neq \emptyset \\
&  \average _ { B_2 (0) \cap \Omega} u \, dy , \qquad && \mbox{if } B_2(0) \cap D  = \emptyset
. 
       \end{aligned}\right.
         $$
         \note{Give a proof or reference for the Poincare inequality below?

{\color{blue}           Proof in Croyle.}

         Changed earlier $\tilde D$ to $D'$.}         
     With this definition, we have the following variant of the Poincar\'e
     inequality (see the dissertation of Croyle \cite[pp.~38--39]{MR3564069}). There exists a constant $C$ so that 
\begin{equation}                    \label{PI}
         \| u - \bar u \| _ { L^2(\locdom 0 2 \cap \Omega) } \leq C 
           \| \nabla u \| _ { L^2 (\locdom 0 4 \cap \Omega)} ,\qquad u
           \in W^ {1,2}_D( B_4(0)\cap \Omega)
\end{equation}
and that $ v \in W^ {1,2} _{\tilde D} ( \reals ^ n _ +)$. Furthermore,
$v$ satisfies the mixed boundary value problem on $ \reals ^ n _ +$,
$$
\left \{ \begin{aligned}& \div A \nabla v - v = G , \qquad && \mbox{in }
  \reals ^ n _+\\
&  v=0, \qquad &&  \mbox{on } \tilde D \\
&  A \nabla v \cdot \nu = g _N , \qquad && \mbox{on } \tilde N
\end{aligned} \right.
$$
for some $G \in L^2 ( \reals ^ n _ +)$ and $ g_ N \in W^ { 1/2, 2} (
\reals ^ { n-1} )$. A tedious calculation gives that
$$
\| G \| _ { L^ 2 ( \reals ^ n _ +)} \leq C \| \nabla u \| _ { L^2 (
  \Omega _4 (0))}.
$$
This estimate makes use of the Poincar\'e inequality \eqref{PI}.
The Neumann data $ g_N$ is given by
$$
g _ N (y) = ( ( u -\bar u ) \frac { \partial \eta}{\partial \nu } \sqrt
{ 1 + |\nabla \phi|^2 } ) \circ \Phi. 
$$
Thus $g_N$ satisfies
$$
\| g_N \| _ { W^ { 1/2, 2} ( \reals ^ { n-1}) } \leq C \| \nabla u \|_
{ L^2 ( \Omega _ 4 (0)) } 
$$
where have  used  trace theorem   and the Poincar\'e inequality
\eqref{PI}.

Since we assume that $ 0 < r < r _0$, we may assume a uniform bound on
the $ C^ { 1,1 }$-norm of $\phi$. Hence we may apply Theorem
\ref{Savare} to estimate $ \nabla v $ in $B^ { 1/2} _ {2, \infty } (
\reals ^ n _ + )$ and then a Sobolev embedding theorem for Besov
spaces to conclude
$$
\left ( \average _{ \reals ^ n _+} |\nabla v |^ p \, dy \right) ^ {
  1/p}
\leq C \left ( \average _ { \Omega _4 (0) } |\nabla u |^2 \, dy \right
) ^ { 1/2}, \qquad 1 \leq p < 2n/( n-1).
$$
Finally, a change of variable leads to the estimate
$$
\left (\average _ { \locdom x r } |\nabla u |^ p \, dy \right ) ^ {
  1/p}
\leq \left (\average _ { \locdom x {2r }} |\nabla u |^ 2 \, dy \right
) ^ { 1/2}.
$$
From here, the techniques found in Giaquinta
\cite[pp.~80--82]{MR1239172}, for example, allow us to establish the
inequality with an $L^1$-average on the left-hand side. 
\end{proof}

\note{Probably need to dig up a reference for the embedding theorem
  for Besov Spaces.}
  
\section{Proof of  Theorem \ref{Main}}
We first observe that  it is known that we may solve the Dirichlet
problem with data in $\sobolev q 1(\partial \Omega)$ (commonly known
as the regularity 
problem)  and obtain non-tangential maximal
function estimates for the gradient for a larger range of indices than
we are considering here.  This will allow us to reduce to the case
when $f_D=0$. 
 See Dahlberg and Kenig
\cite{DK:1987} where results are given for $n\geq 3$ and $ 1 < q <
2+\epsilon$.  However, the result for $C^{1,1}$-domains is much easier
and is covered by the results for $C^1$-domains of Fabes, Jodeit, and
Rivi\'ere \cite{FJR:1978} as well as classical results such as
Kellogg.

Thus, we restrict our attention to the case $f_D=0$.
The proof of our main theorem in this case  relies on a real-variable
technique of Caffarelli and Peral \cite{MR1486629} which
Shen \cite[Theorem 3.2]{ZS:2007} adapted to the study of boundary value
problems.
We quote the result of Shen that is a key part of our argument. 

\begin{theorem}[{\cite[Theorem 3.2]{ZS:2007}}] \label{Shen} Let $Q_0$ be a cube in $\reals ^n$ and
  $F \in L^ { q_0   } ( 2 Q _0)$. Suppose that $ q _0 < q < q_1$ and $
  f \in L^ q ( 2   Q_0)$.  For each subcube $Q$  with  $ Q \subset Q_0$
  and $ |Q| <   \beta |Q_0|$, there exist functions $F_Q$ and $R_Q$ on
  $Q$ so that 
  \begin{gather}\label{ZS0}
    |F| \leq C(|F_Q| + |R_Q|) \\
\label{ZS2}    
    \left( \average _Q |R_Q|^ {q_1} \, d\sigma\right) ^ { 1/q_1}
    \leq C \left [ \left ( \average _ {2Q} |F|^ { q_0} \, dx \right) ^
      { 1/q_0}  + \sup _{ Q' \supset Q } \left ( \average _ { Q'} |f|^
      { q_ 0 } \, dx\right) ^ { 1/q_0 } \right]\\
\label{ZS1}    \left( \average _Q |F_Q|^ { q_0} \, dy\right) ^ { 1/q_0}
\leq C \sup _ { Q' \supset Q } \left ( \average _{Q'} |f|^ { q_0 } \,
dy \right )^ { 1/q_0}.
  \end{gather}
  With these assumptions, we have
  $$
  \left( \average _ {Q_0 } |F|^ q \, dy \right ) ^ { 1/q} 
  \leq C\left [ \left ( \average _ { 2 Q_0} |F|^ { q_0 } dy \right) ^
    { 1/q_0 }
    + \left ( \average _ {2Q_0 } |f|^ q \, dy \right ) ^ { 1/q} \right]. 
  $$
  Here, $ \beta <1$ and the constants $C$ in (\ref{ZS0}-\ref{ZS1})
  are independent of $f$ and $Q$. 
\end{theorem}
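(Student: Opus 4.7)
The plan is to prove this via a good-$\lambda$ inequality obtained from a stopping-time (Calder\'on--Zygmund) decomposition of $|F|^{q_0}$, together with the split $F=F_Q+R_Q$ on each stopping cube. After rescaling I may assume $|Q_0|=1$. Fix a large constant $\lambda_0$ (to be chosen, depending on the $L^{q_0}$-average of $|F|$ over $2Q_0$ and of the maximal function $\mathcal{M}(|f|^{q_0})$ over $2Q_0$) and for $\lambda>\lambda_0$ perform a Calder\'on--Zygmund decomposition of $|F|^{q_0}\mathbf 1_{2Q_0}$ at height $\lambda$. This produces a maximal family of disjoint dyadic subcubes $\{Q_k\}$ of $2Q_0$ on which $\average_{Q_k}|F|^{q_0}\in(\lambda,2^n\lambda]$, with $|F|^{q_0}\le\lambda$ almost everywhere off $\bigcup Q_k$, and one chooses $\lambda_0$ large enough that each $Q_k$ satisfies $|Q_k|<\beta|Q_0|$ so that the decomposition hypotheses apply.

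On each such $Q_k$, write $|F|\le C(|F_{Q_k}|+|R_{Q_k}|)$ from \eqref{ZS0}. For a constant $A$ to be fixed, split
\[
|\{x\in Q_k:|F(x)|^{q_0}>A\lambda\}|
\le |\{|R_{Q_k}|^{q_0}>\tfrac{A\lambda}{2C^{q_0}}\}|
+|\{|F_{Q_k}|^{q_0}>\tfrac{A\lambda}{2C^{q_0}}\}|.
\]
For the first term, Chebyshev together with \eqref{ZS2} gives a bound
\[
\lesssim (A\lambda)^{-q_1/q_0}|Q_k|\Big[\average_{2Q_k}|F|^{q_0}+\sup_{Q'\supset Q_k}\average_{Q'}|f|^{q_0}\Big]^{q_1/q_0}
\lesssim A^{-q_1/q_0}\big(1+\lambda^{-1}\mathcal M(|f|^{q_0})^*\big)^{q_1/q_0}|Q_k|,
\]
using that $\average_{2Q_k}|F|^{q_0}\lesssim \lambda$ by the stopping condition. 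For the second term, \eqref{ZS1} together with Chebyshev forces the cube $Q_k$ to satisfy $\sup_{Q'\supset Q_k}\average_{Q'}|f|^{q_0}\gtrsim A\lambda$, so that $Q_k\subset\{\mathcal M(|f|^{q_0}\mathbf 1_{2Q_0})>cA\lambda\}$. Summing over $k$ yields the good-$\lambda$ inequality
\[
|\{x\in Q_0:|F|^{q_0}>A\lambda\}|\le \varepsilon(A)\,|\{x\in 2Q_0:|F|^{q_0}>\lambda\}|+|\{x\in 2Q_0:\mathcal M(|f|^{q_0}\mathbf 1_{2Q_0})>cA\lambda\}|,
\]
with $\varepsilon(A)\to 0$ as $A\to\infty$, valid for all $\lambda>\lambda_0$.

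To conclude, multiply by $\lambda^{q/q_0-1}$ and integrate in $\lambda$ from $0$ to $\infty$; the contribution of $\lambda\le\lambda_0$ is absorbed into the $L^{q_0}$-average of $|F|$ on $2Q_0$, which appears in the final bound. Choosing $A$ large so that $\varepsilon(A)A^{q/q_0}<\frac12$ lets us absorb the first term on the right into the left, while the maximal-function term is controlled in $L^{q/q_0}$ by the Hardy--Littlewood theorem (using $q>q_0$), giving $\|f\|_{L^q(2Q_0)}^q$. The main obstacle is the bookkeeping in the absorption step: one must truncate $|F|$ first (replacing it by $\min(|F|,N)$ and letting $N\to\infty$) to be sure the left-hand side is finite before absorbing, and one must track carefully that $\lambda_0$ depends only on the averages on $2Q_0$ (not on $F$ globally) so the final estimate is quantitative in the stated form.
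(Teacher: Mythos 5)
The paper does not actually prove this statement: it is quoted as a black box (Theorem 3.2 of Shen's paper, itself an adaptation of Caffarelli--Peral), so there is no internal proof to compare against. Your argument is, in outline, exactly the standard proof of that cited result: a Calder\'on--Zygmund/good-$\lambda$ argument in which each stopping cube is split using \eqref{ZS0}, the $R_Q$ piece is handled by Chebyshev at exponent $q_1$ (this is where $q<q_1$ buys the decay $\varepsilon(A)\lesssim A^{-q_1/q_0}$ needed for absorption), and the $F_Q$ piece is pushed onto level sets of the maximal function of $|f|^{q_0}$, which is where $q>q_0$ enters. So you are reconstructing the proof of the quoted ingredient along the same lines as its source, not taking a genuinely different route.

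Two points in your write-up need repair. First, the claim that $\average_{2Q_k}|F|^{q_0}\lesssim\lambda$ ``by the stopping condition'' does not follow from a plain Calder\'on--Zygmund decomposition of $|F|^{q_0}$: maximality controls the average over the dyadic parent of $Q_k$, not over the concentric double $2Q_k$, which may meet neighboring stopping cubes with large averages. The standard fix (and the one used in the cited source) is to run the stopping-time argument on the level set $\{\mathcal M(|F|^{q_0}\mathbf 1_{2Q_0})>\lambda\}$ with a Whitney-type decomposition, so that a fixed dilate of each $Q_k$ contains a point where the maximal function is at most $\lambda$; this does control $\average_{2Q_k}|F|^{q_0}$. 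Second, in your bound for the $R_{Q_k}$ term you absorb $\sup_{Q'\supset Q_k}\average_{Q'}|f|^{q_0}$ into a single global quantity $\mathcal M(|f|^{q_0})^*$; that supremum is cube-dependent and can be large, so it must be handled by the same dichotomy you use for $F_{Q_k}$: either the supremum is at most $\gamma A\lambda$, in which case both the $R$- and $F$-contributions are at most $\varepsilon(A)|Q_k|$ with $\varepsilon(A)\to 0$, or else $Q_k\subset\{\mathcal M(|f|^{q_0}\mathbf 1_{2Q_0})>\gamma A\lambda\}$. With those two adjustments, together with your truncation remark to justify the absorption step, the argument closes.
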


To apply this result, we will need to work on a set in $ \partial
\Omega$ which can be mapped to a cube in $ \reals ^ { n-1}$. We will
call these sets surface cubes and give a precise definition.  
 We recall our covering of $ \partial \Omega$ by  balls
as in (\ref{Coords}). If we fix a ball $B= B_{r_0} (x)$ so that $ \partial \Omega$ is
given by the graph of $ \phi$ in $ B$, we define a surface cube to be
the image of a cube in $ \reals ^ {n-1}$ under the map $ x'
\rightarrow ( x', \phi (x'))$. We also may define dilations of
boundary cubes $rQ$ (at least for $r$ small) by dilating the cube in $
\reals ^ { n-1}$.

Our next step  towards applying Theorem 
\ref{Shen}  is the following reverse H\"older
inequality at the boundary.
\begin{lemma}\label{Partway}
Let  $ \Omega$, $N$ and $D$ be a standard $C^{1,1}$-domain for the mixed
problem. 
Let 
$u$ be a weak solution of the mixed problem (\ref{WMP}) in $ \Omega$.
Assume that $ \nabla u ^ *  \in L^ { q_0} (
\partial \Omega)$ for some $ q_0 >1$.

If $u = 0 $ on $D \cap B _{2r} (x)$ and $ \partial u/ \partial \nu =
0$ on $N \cap B_ { 2r } (x)$, then for $q$ with $1< q < n/( n-1)$, we have
$$
\left( \average _ { \sball x r } |\nabla u | ^ q \, d\sigma \right) ^
{   1/q } \leq C \average _ {\locdom x {2r} } |\nabla u |\, dy.
$$
\end{lemma}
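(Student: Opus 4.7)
The plan is to deduce the surface $L^q$-estimate from the solid $L^p$-reverse H\"older inequality of Theorem~\ref{RHI} via a pointwise bound on the non-tangential maximal function $\nabla u^*$. After a standard rescaling to $r=1$, I would first observe that, since $\nabla u^*\in L^{q_0}(\partial\Omega)$ with $q_0>1$, the non-tangential limit of $\nabla u$ exists a.e.\ on $\partial\Omega$ and satisfies $|\nabla u(z)|\le \nabla u^*(z)$ there. Thus the task reduces to bounding $\|\nabla u^*\|_{L^q(\sball x 1)}$ by $\int_{\locdom x 2}|\nabla u|\,dy$.

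Next, I would fix an auxiliary exponent $p\in(1,2n/(n-1))$. For $z\in\sball x 1$ and $y\in\Gamma(z)\cap\locdom x{3/2}$, writing $\rho=|y-z|$, the cone condition $\dist(y,\partial\Omega)\ge\rho/(1+\alpha)$ lets me apply the mean-value property to the harmonic function $u$ on the interior ball $B_{\rho/2}(y)\subset\locdom z{C\rho}$, and Jensen's inequality gives
\[
|\nabla u(y)|\le C\Bigl(\average_{\locdom z{C\rho}}|\nabla u|^p\,dw\Bigr)^{1/p}.
\]
Applying Theorem~\ref{RHI} at $z$ with scale $C\rho$ converts this $L^p$-average into an $L^1$-average, and taking the supremum over $y$ in the truncated cone yields
\[
\nabla u^*(z)\le C\,\mathcal N(z)+C\average_{\locdom x 2}|\nabla u|\,dw,\qquad \mathcal N(z):=\sup_{0<\rho<C_0}\average_{\locdom z\rho}|\nabla u|\,dw,
\]
where the additive term accounts for the far piece $\Gamma(z)\setminus B_{3/2}(x)$ and is estimated crudely using $|y-z|\gtrsim 1$ there.

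Finally, I would show $\|\mathcal N\|_{L^q(\sball x 1)}\le C\int_{\locdom x 2}|\nabla u|$. The plan is to apply Theorem~\ref{RHI} a second time with an auxiliary exponent $p_1\in(1,q)$ to upgrade the $L^1$-average in $\mathcal N(z)$ to an $L^{p_1}$-average, so that $\mathcal N(z)\le C[\mathcal M_\sigma(|\nabla u|^{p_1}\chi_{\locdom x 2})(z)]^{1/p_1}$ for an appropriate surface-type maximal operator $\mathcal M_\sigma$ on $\partial\Omega$. Since $\partial\Omega$ is $(n-1)$-Ahlfors regular, $\mathcal M_\sigma$ is bounded on $L^{q/p_1}(\partial\Omega)$, and together with a final application of Theorem~\ref{RHI} (to return from $L^{p_1}$ to $L^1$) this closes the argument.

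The hard part will be this last step: translating the solid maximal averages $\average_{\locdom z\rho}|\nabla u|\,dw$ into a genuine surface maximal operator amenable to the Hardy--Littlewood maximal theorem on $\partial\Omega$. This requires a Fubini/Carleson argument exploiting the $(n-1)$-Ahlfors regularity of $\sigma$ and a careful tracking of the dyadic scales at which the supremum is attained.
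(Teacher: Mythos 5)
Your reduction of the lemma to the estimate $\|\mathcal N\|_{L^q(\sball x 1,\,d\sigma)}\le C\int_{\locdom x 2}|\nabla u|\,dy$, where $\mathcal N(z)=\sup_\rho\average_{\locdom z\rho}|\nabla u|\,dw$, is where the argument breaks down, and you have correctly sensed that this is ``the hard part'' --- but it is not a hard real-variable step, it is the entire analytic content of the lemma, and none of the tools you invoke can supply it. Theorem \ref{RHI} only relates solid averages to solid averages, so it cannot convert $\mathcal N$ into a surface maximal operator. An estimate of the form $\bigl\|\sup_\rho\rho^{-n}\mu(B_\rho(\cdot))\bigr\|_{L^{q}(\partial\Omega,d\sigma)}\le C\|\mu\|$ is \emph{false} for general measures $\mu$ on $\Omega$ (test it on a unit mass concentrated near a boundary point: the left side blows up as the mass approaches the boundary), so Ahlfors regularity plus the Hardy--Littlewood theorem cannot close the argument; you must use the equation and the boundary conditions again. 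The natural Fubini/Carleson route you sketch, $\int_{\locdom z\rho}|\nabla u|\,dw\le C\rho\int_{\sball z{C\rho}}\nabla u^*\,d\sigma$, gives $\mathcal N(z)\le CM(\nabla u^*)(z)$ and is circular, since controlling $\nabla u^*$ in $L^q$ of surface measure is exactly what you set out to do.

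The ingredient you are missing is the weighted Rellich-type boundary estimate of Ott and Brown (Lemma 4.9 of \cite{MR3042705}), which is the one step in the paper's proof where a surface integral is traded for a solid integral: with $\delta$ the distance to the interface $\Lambda$ and $u$ satisfying the homogeneous mixed conditions on $\ball x{2r}$,
$$
\average_{\sball x r}|\nabla u|^2\,\delta^\alpha\,d\sigma\le Cr\average_{\locdom x{2r}}|\nabla u|^2\,\delta^{\alpha-1}\,dy .
$$
The paper's proof runs: H\"older to pass from the $L^q(d\sigma)$ average to a $\delta$-weighted $L^2(d\sigma)$ average, the Rellich estimate above to move into the solid, H\"older again to reach an $L^p$ solid average with $2<p<2n/(n-1)$ (the exponents $s$, $\alpha$, $p$ are tuned so that the two negative powers of $\delta$ that appear are integrable on $\partial\Omega$ and $\Omega$ respectively, via Lemmata 2.4 and 2.5 of \cite{MR3034453}), and only then Theorem \ref{RHI} to descend from $L^p$ to $L^1$. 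Your first two steps (interior estimates in cones giving $\nabla u^{\triangledown}(z)\le C\mathcal N(z)$, and the crude bound on the far part of the cone) are fine and elementary, but without a substitute for the Rellich identity the proposal does not prove the lemma.
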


\begin{proof}
Given $q$ with  $1< q < n/(n-1)$, we may choose $ s \in (0,1)$, but
close to 1,  so that $ p=(
1+s)q/s$ satisfies  $ 2< p < 2n/( n-1)$.
\note{ {\color{blue}With $s\in (0,1)$, we have $ (1+s)/s \in (2,\infty)$. We want $
(1+s)/s$ to be slightly larger than 2 so that $p$ is still less than $
2n/(n-1)$. In the argument below, we need $ s<1$ so that $ \delta ^ {
  -s}$ is integrable on the boundary.  }}
We fix $x \in \partial \Omega$ and $ r>0$ and let $ \Delta _ r =
\sball x r$. We begin by showing
\begin{equation}
  \label{Claim1}
  \left ( \average _ { \Delta _r } |\nabla u |^ q \, d\sigma \right )
  ^ { 1/q}
  \leq C \left ( \average _ { \Omega _ {2r}} |\nabla u |^ p \, dy
    \right) ^ { 1/p}.
\end{equation}
The first step to proving \eqref{Claim1} is to use  H\"older's
inequality to obtain that
$$
\left( \average _ { \Delta _r} |\nabla u |^ q \, d\sigma \right ) ^ {
  1/q}
\leq C
\left (\average  _{ \Delta _r } |\nabla u |^2 \, \delta ^ \alpha \,
d\sigma \right ) ^ { 1/2} \left ( \average _ { \Delta _r } \delta ^ {
  -\alpha q / ( 2-q) } \, d\sigma \right ) ^ { 1/q - 1/2}
$$
where $ \alpha $ is chosen so that $ \alpha q /( 2-q) =s$. Next, we
use  estimate of Ott and Brown \cite[Lemma 4.9]{MR3042705}, that $ \partial u
/\partial \nu =0$ on $ \Delta _r \cap N$ and then H\"older's
inequality to obtain 
\begin{equation*}
  \begin{split}
\left ( \average _ { \Delta _r } |\nabla u |^2 \delta ^ \alpha \,
d\sigma \right ) ^ { 1/2}
&\leq C r^ { 1/2} \left ( \average _ { \Omega _ {
    2r}} |\nabla u | ^ 2 \delta ^ { \alpha -1} \, dy \right ) ^ { 1/2}  \\
&\leq C r ^ { 1/2} \left ( \average _ { \Omega _ { 2r}} |\nabla u |^ p
  \, dy \right ) ^ { 1/p } 
  \cdot \left ( \average _ { \Omega _ {2r} } \delta ^ { ( \alpha -1 ) p
    / ( p-2) }\, dy \right ) ^ { 1/2 - 1/p }. 
  \end{split}
\end{equation*}
Using our definitions of $p$, $q$, and $\alpha$,  a calculation gives that  $ (
1-\alpha)p /(p-2)= 1+s$. Thus, we arrive at the estimate
\begin{equation}
  \label{Almost}
  \begin{split}
  \left( \average _ { \Delta _r } |\nabla u |^ q \, d\sigma \right) ^
       { 1/q} 
     &  \leq C r^ {1/2}
       \left ( \average _{ \Delta _r } \delta ^ { -s} \, d \sigma
       \right ) ^ { 1/q-1/2} \\
       &\qquad \times \left ( \average_{ \Omega _ { 2r}} \delta
         ^ {-s-1} \, dy \right ) ^ { 1/2 -1/p}
           \left ( \average _ { \Omega _ { 2r} } |\nabla u |^ p \, dy
           \right ) ^ { 1/p }.
  \end{split}
\end{equation}
From Lemmata 2.4 and 2.5 in Taylor {\em et.~al.~}\cite{MR3034453} we
have
\begin{align*}
\average _ { \sball x r } \delta ^ { -a} \,d \sigma \approx \max ( r,
\delta (x) ) ^ { -a} , \qquad a < 1.
\\
\average _ { \locdom  x r } \delta ^ { -b} \,d \sigma \approx \max ( r,
\delta (x) ) ^ { -b} , \qquad b < 2.
\end{align*}
Recalling that $ s/q - ( 1+s) /p =0$, we have
$$
r ^ { 1/2} \left( \average _{\Delta _r } \delta ^ { -s} \, d\sigma
\right ) ^ { 1/q- 1/2} \left ( \average _ { \Omega _ { 2r}} \delta ^ {
    -s-1} \, dy \right) ^ { 1/2 - 1/p} \approx r ^ { 1/2} \max
  (r,\delta(x))^ { -1/2} \leq C.
  $$
Using this, \eqref{Almost} and Theorem \ref{RHI} we obtain the
conclusion of the Lemma, except with $ \locdom x {4r}$ on the
right. A simple covering argument alllows us to obtain the result as
stated. 
\end{proof}

Before continuing, we introduce several truncated
maximal functions. One appears in the  the next Lemma and the
remaining ones will be needed the proof of our main theorem.
The use of these auxiliary functions is needed to repair an error in
the work of Ott and Brown. The estimate (7.4) of \cite{MR3042705} is
not correct.
A correction is being prepared which uses a version of
the argument presented below. Thus, the results of Brown and Ott are
correct.

 We fix a small constant $c$ and a parameter  $r>0$. In applications,
 the value of $r$ will be clear from the context. The truncated
 non-tangential maximal functions are defined by 
\begin{equation}\label{TruncDef}
u^ \vartriangle (x) = \sup _ { y \in \Gamma (x) , |x-y | > cr }
|u(y) | , \qquad
u^ \triangledown (x) = \sup _ { y \in \Gamma (x) , |x-y | < cr }
|u(y) | .
\end{equation}
We will also need to introduce the Hardy-Littlewood maximal function
on $ \partial \Omega$ which we define as
$$
M(f)(x) = \sup _{ s>0} \average _{\sball x s } |f|\, d\sigma.
$$
In analogy with the truncated non-tangential maximal functions defined
in \eqref{TruncDef}, we will also define runcated versions of the
Hardy-Littlewood maximal function using the parameter $r$.  The truncated maximal functions
are defined by: 
$$
M_0(f)(x) = \sup _ { 0<s<r } \average _ { \sball x s } |f|\, d \sigma
\qquad M_ { \infty } (f) (x) = \sup _ { r \leq s } \average _ { \sball
  x s } |f|\, d \sigma.
$$

The next Lemma gives the value of $c$ that we will use in
(\ref{TruncDef}). 

\begin{lemma}\label{LocMaxFcn}
  Suppose that $u$ is a local solution  of the mixed problem
  $$\left \{ 
  \begin{aligned}
    &\Delta u =0 , \qquad && \mbox{in  } \locdom x { 2r}\\
    &u=0, \qquad && \mbox {on } D \cap \ball x { 2r}\\
    &\frac { \partial u }{ \partial \nu } = 0 ,  \qquad && \mbox{on }
    N \cap \ball x { 2r}
  \end{aligned}
  \right. 
  $$
  Then given $q$ in $(1,\infty)$,  there exists $c >0$ so that  with $
  \nabla u^ \triangledown $ as in \eqref{TruncDef} we have
  $$
  \left ( \average _{ \sball x r } (\nabla u ^\triangledown)^q    \,
  d\sigma \right ) ^ { 1/q }  \leq C \average _ { \sball x { 2r } }
  \nabla u ^* \, dy .  
    $$
\end{lemma}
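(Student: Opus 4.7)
The plan is to establish a pointwise comparison between $\nabla u^\triangledown$ and a truncated Hardy-Littlewood maximal function on $\partial\Omega$, then close the estimate using the $L^q$-boundedness of this maximal function together with the reverse H\"older inequality of Lemma~\ref{Partway}.

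I begin with the pointwise bound
\[
\nabla u^\triangledown(w) \leq C\, M_0(\nabla u^*)(w), \qquad w \in \sball x r.
\]
To derive it, fix $w \in \sball x r$ and $y \in \Gamma(w)$ with $|w-y|<cr$, and write $\rho_y = \dist(y, \partial\Omega)$. Since each component of $\nabla u$ is harmonic in $\locdom x {2r}$, the interior mean-value property gives $|\nabla u(y)| \leq C\fint_{\ball y {\rho_y/2}} |\nabla u|\, dy'$. For any interior point $y'$, the non-tangential ``shadow'' $S_{y'} = \{w' \in \partial\Omega : y' \in \Gamma(w')\}$ has surface measure comparable to $\rho_{y'}^{n-1}$ and satisfies $|\nabla u(y')| \leq \nabla u^*(w')$ on $S_{y'}$; a Fubini computation thus converts the solid mean into a boundary average of $\nabla u^*$ on a surface ball of radius $\approx \rho_y$. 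The decisive observation is that $w$ itself lies in $S_y$ — this is exactly the condition $y \in \Gamma(w)$ — which allows me to recenter the boundary ball at $w$ (widening the non-tangential aperture only within constants), yielding
\[
|\nabla u(y)| \leq C\fint_{\sball w {C\rho_y}} \nabla u^*\, d\sigma.
\]
Taking the supremum over admissible $y$, for which $\rho_y$ ranges over $(0, cr)$, produces the pointwise bound with $M_0$ as defined in the previous section.

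Next I choose $c$ small enough that every surface ball appearing in $M_0(\nabla u^*)(w)$ for $w \in \sball x r$ lies inside $\sball x {2r}$; this is a containment argument since $\sball w s \subset \sball x {2r}$ whenever $w \in \sball x r$ and $s < r$. The $L^q$-boundedness of the localized Hardy-Littlewood maximal function for $q > 1$ then gives
\[
\Big(\fint_{\sball x r} (\nabla u^\triangledown)^q\, d\sigma\Big)^{1/q} \leq C\Big(\fint_{\sball x {2r}} (\nabla u^*)^q\, d\sigma\Big)^{1/q}.
\]
To replace the $L^q$-average of $\nabla u^*$ by its $L^1$-average, I refine the pointwise bound by inserting a power: repeating the shadow/Fubini argument with $|\nabla u|_{\partial\Omega}^{q_1}$ in place of $\nabla u^*$ gives the sharper
\[
\nabla u^\triangledown(w) \leq C\, \bigl[M_0\bigl(|\nabla u|_{\partial\Omega}^{q_1}\bigr)(w)\bigr]^{1/q_1}
\]
for any $q_1 \in (1, n/(n-1))$, where $|\nabla u|_{\partial\Omega}$ denotes the non-tangential limit. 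Choosing $q_1 \in (1, \min(q, n/(n-1)))$, the $L^{q/q_1}$-boundedness of $M_0$ (valid since $q/q_1 > 1$), the reverse H\"older estimate of Lemma~\ref{Partway} at exponent $q_1$, and the Fubini bound $\fint_{\locdom x {Cr}} |\nabla u|\, dy \leq C\fint_{\sball x {Cr}} \nabla u^*\, d\sigma$ together close the chain of inequalities, after a standard covering argument to replace $Cr$ by $2r$ on the right-hand side.

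The main obstacle is the extension to $q \geq n/(n-1)$: Lemma~\ref{Partway} supplies a boundary reverse H\"older only up to this exponent, so the direct chain of inequalities breaks down for large $q$. Handling this case requires either an appeal to Gehring-type self-improvement of the reverse H\"older estimate just derived for $\nabla u^\triangledown$ at exponent $q_1 < n/(n-1)$, or a more delicate choice of the truncation parameter $c = c(q)$ combined with an iteration of the reverse H\"older inequality across scales; in either case, the careful bookkeeping of exponents and constants is the principal technical difficulty.
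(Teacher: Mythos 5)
There is a genuine gap at the decisive step. Your first pointwise bound, $\nabla u^\triangledown(w)\le C\, M_0(\nabla u^*)(w)$, is correct, but it only yields $L^q$ control of $\nabla u^\triangledown$ by the $L^q$ average of $\nabla u^*$, which is circular for the purposes of this lemma: the whole point is to have an $L^1$ average of $\nabla u^*$ on the right. To break the circle you assert a ``refined'' pointwise bound $\nabla u^\triangledown(w)\le C\bigl[M_0\bigl(|\nabla u|_{\partial\Omega}^{q_1}\bigr)(w)\bigr]^{1/q_1}$ with the non-tangential \emph{trace} on the right, obtained ``by repeating the shadow/Fubini argument.'' That mechanism cannot produce the trace. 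Integrating over shadows converts an interior solid average into a boundary average of the non-tangential \emph{maximal} function, because the only pointwise inequality available is $|\nabla u(y')|\le \nabla u^*(w')$ for $w'$ in the shadow of $y'$; the reverse direction --- bounding interior values of $\nabla u$ by boundary averages of its trace --- is not a real-variable fact but a statement about local solvability of the boundary value problem (it holds for the Poisson integral in a half-space, but in a $C^{1,1}$ domain with mixed data it must be proved). This is precisely the analytic content the lemma requires, and it is where the paper's proof invests its effort: one writes a representation formula for $\nabla u$ via layer potentials whose density is the boundary trace of $\nabla u$ on $\sball x {2r}$ (plus interior error terms from the cutoff) and invokes the Coifman--McIntosh--Meyer theorem as in Section 6 of Ott and Brown to obtain the local estimate \eqref{LMF.1}; only then do Lemma \ref{Partway} and the Fubini bound \eqref{LMF.2} (which you do have) close the argument. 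Without some such singular-integral or representation-formula input, your chain of inequalities never leaves the scale of $\nabla u^*$.

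A secondary point: your closing paragraph correctly notes that Lemma \ref{Partway} is only available for exponents below $n/(n-1)$ while the lemma is stated for all $q\in(1,\infty)$, but the proposed remedies (Gehring self-improvement, iteration over scales) are left as sketches and do not constitute a proof. The paper's own argument has the same dependence on Lemma \ref{Partway}, and in the application the lemma is only used with exponent $q_1<n/(n-1)$ (see \eqref{MT.B}), so this is a concern about the stated range rather than a defect peculiar to your approach; still, it remains an open step in your write-up.
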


\begin{proof} We establish a representation formula for $ \nabla u$
  and apply the result of Coifman, McIntosh and Meyer \cite{CMM:1982}
  as in the work of Ott and Brown 
  \cite[Section 6]{MR3042705} to conclude that
\begin{equation} \label{LMF.1}
  \left ( \average _{ \sball x r } ( \nabla u ^ * ) ^ { q} \, d\sigma
  \right ) ^ { 1/q}  \leq C\left [ \average _ {\locdom x {2r} } |\nabla u | \,
    dy
    + \left ( \average_ { \sball x {2r} } |\nabla u | ^ q \, d\sigma\right )^ {
      1/q}\right].
\end{equation}
We may use Lemma \ref{Partway} to bound the  second term  on the right
of \eqref{LMF.1} and a standard   argument gives that there is a
constant $C$ so that 
\begin{equation}\label{LMF.2}
\int _ { \locdom x { 2r} } |\nabla u | \, dy
\leq C r \int _ { \sball x { Cr} } \nabla u ^ * \, d \sigma.
\end{equation}
Combining \eqref{LMF.1} and \eqref{LMF.2}, we obtain the desired result
with $ \sball x { Cr}$  rather than $ \sball x {2r}$ on the right. We
may obtain the stated result by a simple covering argument. This may
require us to decrease the value of the constant $c$ used in the
definition of $ \nabla u ^ \triangledown$. 
\end{proof}

We now give two Lemma related to the truncated maximal functions.

  \begin{lemma} \label{Minf}
    Suppose that $x,y$ are in $ \partial \Omega$ and $ |x-y| < Ar$,
    then we have
    $$ M_\infty (f) (x) \leq C_A M_ \infty (f) (y).
    $$
  \end{lemma}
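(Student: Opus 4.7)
The plan is to exploit the doubling property of surface measure on $\partial\Omega$, which holds because the boundary is $C^{1,1}$ (in fact Lipschitz would suffice). Since $M_\infty$ only takes sup over scales $s \geq r$, the "shift" caused by moving the center from $y$ to $x$ (a distance less than $Ar$) is absorbed by a comparable ball of slightly larger radius.

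First, fix $s \geq r$ and observe that since $|x-y| < Ar \leq As$, we have the inclusion
$$
\sball x s \subset \sball y {(A+1)s}.
$$
Therefore
$$
\average_{\sball x s} |f|\,d\sigma \leq \frac{\sigma(\sball y {(A+1)s})}{\sigma(\sball x s)} \average_{\sball y {(A+1)s}} |f|\,d\sigma.
$$
The standard Ahlfors-regularity estimate for the surface measure on a $C^{1,1}$-domain gives $\sigma(\sball z t) \approx t^{n-1}$ with constants depending only on $M$, so the prefactor above is bounded by a constant $C_A$ depending only on $A$ and $M$.

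Second, since $(A+1)s \geq s \geq r$, the radius $(A+1)s$ is an admissible scale in the definition of $M_\infty(f)(y)$, so
$$
\average_{\sball y {(A+1)s}} |f|\,d\sigma \leq M_\infty(f)(y).
$$
Combining the two displays and taking the supremum over $s \geq r$ on the left yields $M_\infty(f)(x) \leq C_A M_\infty(f)(y)$.

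The only nontrivial ingredient is the two-sided bound $\sigma(\sball z t) \approx t^{n-1}$ for all $z \in \partial\Omega$ and $0 < t \leq \diam \Omega$, which is a routine consequence of the graph representation \eqref{Coords} together with the uniform bound \eqref{FunBounds} on $\nabla\phi$; this is what makes the comparison constant depend only on $A$ and $M$.
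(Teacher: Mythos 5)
Your proof is correct and is essentially the same argument as the paper's: the inclusion $\sball x s \subset \sball y {(A+1)s}$ (the paper uses radius $s+Ar$, which is equivalent for $s\geq r$), the Ahlfors-regularity bound on the ratio of surface measures, and the observation that the enlarged radius is still an admissible scale for $M_\infty(f)(y)$. No further comment is needed.
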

  \begin{proof} By the triangle inequality, we have $ \sball x s
    \subset \sball y { s+Ar}$. Thus it follows that
    $$
    \average _ { \sball x s } |f|\, d\sigma
    \leq  \frac { \sigma ( \sball y { s+Ar})}{\sigma (\sball x s )}
    \average_{ \sball y { s+Ar}} |f|\, d\sigma. 
    $$
    If we require that $s \geq r$, then
    we have a constant so that
    $ \sigma ( \sball y { s+Ar})/\sigma (\sball x s ) 
\leq C_A$ which gives the Lemma. 
  \end{proof}

  \begin{lemma} \label{Far} We have 
    $$
    u ^ \vartriangle (x) \leq C M _ {\infty }(  u^* ) (x).
    $$
    The constant depends on the value of $c$
    entering into the definition of $ u ^ \vartriangle$.  
  \end{lemma}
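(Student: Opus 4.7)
The plan is to use a standard argument that replaces the pointwise value $|u(y)|$ for $y$ far from the boundary with an average of $u^{*}$ over a suitably chosen surface ball. The key geometric observation is that if $y \in \Gamma(x)$ with $|x-y| > cr$, then the cone condition forces $y$ to be non-tangentially close to a large collection of boundary points near $x$, so $|u(y)|$ can be majorized by $u^{*}(z)$ for all $z$ in some surface ball.

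Concretely, I would first fix $y \in \Gamma(x)$ with $|x-y| > cr$ and set $\rho = \dist(y,\partial\Omega)$. From $y \in \Gamma(x)$ we obtain $(1+\alpha)\rho > |x-y| > cr$, hence $\rho > cr/(1+\alpha)$. Let $\pi(y) \in \partial\Omega$ be a nearest point projection of $y$, so $|y - \pi(y)| = \rho$. The next step is a triangle-inequality check: for any $z \in \sball{\pi(y)}{\alpha\rho}$, we have $|z-y| \leq |z-\pi(y)| + |\pi(y)-y| < \alpha\rho + \rho = (1+\alpha)\rho = (1+\alpha)\dist(y,\partial\Omega)$, so $y \in \Gamma(z)$, which gives $|u(y)| \leq u^{*}(z)$.

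Averaging this pointwise bound over $z \in \sball{\pi(y)}{\alpha\rho}$ gives $|u(y)| \leq \average_{\sball{\pi(y)}{\alpha\rho}} u^{*} \, d\sigma$. I then want to replace this by an average over a surface ball centered at $x$ of radius at least $r$ so that it is dominated by $M_\infty(u^{*})(x)$. Since $|\pi(y)-x| \leq (2+\alpha)\rho$, the surface ball $\sball{\pi(y)}{\alpha\rho}$ is contained in $\sball{x}{R}$ with $R := \max(r,(2+2\alpha)\rho)$, and the ratio $\sigma(\sball{x}{R})/\sigma(\sball{\pi(y)}{\alpha\rho})$ is bounded by a constant depending only on $\alpha$, $c$, and the Ahlfors-regularity of $\partial\Omega$ (this is where the dependence on $c$ in the constant enters, since $\rho \geq cr/(1+\alpha)$ controls the size of the denominator relative to $r$).

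Combining these, $|u(y)| \leq C \average_{\sball{x}{R}} u^{*} \, d\sigma \leq C M_\infty(u^{*})(x)$, uniformly in $y$, and taking the supremum over admissible $y$ yields $u^\vartriangle(x) \leq C M_\infty(u^{*})(x)$. The only mild obstacle is the two-case bookkeeping of whether $(2+2\alpha)\rho \geq r$ or not; both are handled by the single choice $R = \max(r,(2+2\alpha)\rho)$, with the lower bound $\rho > cr/(1+\alpha)$ ensuring the ratio of measures remains uniformly bounded.
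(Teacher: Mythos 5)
Your proposal is correct and follows essentially the same argument as the paper: project $y$ to a nearest boundary point, observe $y\in\Gamma(z)$ for $z$ in a surface ball of radius $\alpha\,\dist(y,\partial\Omega)$ about that projection, average $u^{*}$ over an enlarged surface ball centered at $x$, and use $\dist(y,\partial\Omega)>cr/(1+\alpha)$ to control the measure ratio and to ensure the ball qualifies for $M_{\infty}$. Your explicit choice $R=\max(r,(2+2\alpha)\rho)$ is in fact slightly more careful than the paper's write-up, which uses the radius $(2+2\alpha)d(y)$ directly, and it correctly identifies where the dependence on $c$ enters.
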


  \begin{proof} Fix $x\in \partial \Omega$ and suppose that $y \in
    \Gamma (x)$. Fix $ \hat y$ so that $ |y-\hat y| = d(y) = \dist(y,
    \partial \Omega)$ and observe that if $ |z-\hat y | < \alpha d(y)$, we
    have $y \in \Gamma (z)$. This implies $ |u(y) | \leq u^ * (z) $ 
    for $ z \in \sball { \hat y } { \alpha d(y)}$.
    By the triangle inequality $
    |x-\hat y| \leq |x-y | + |y - \hat y| \leq (2+ \alpha )
    d(y)$. Hence we have that $\sball { \hat y} {\alpha d (y)} \subset
    \sball x { ( 2 + 2 \alpha ) d(y)} $. It follows that
    $$
    |u(y) | \leq 
     \frac { \sigma ( \sball x { (2 + 2 \alpha )d(y)} )}
    { \sigma ( \sball {\hat y} { \alpha d(y) } ) } \average _ { \sball x { (
        2+ 2 \alpha ) d(y)}} u^ * \, d\sigma  . 
    $$
If, in addition, we assume that $ |x-y| >cr$, then we will have $ d(y)
> cr/(1+ \alpha)$ and  we obtain
$$
u^ \vartriangle (x) \leq C M _ \infty ( u ^ *) ( x).
$$
  \end{proof}

\begin{proof}[{Proof of Theorem \ref{Main}}]
We fix $ f_N \in L^q (N)$ with $ 1 < q < n/(n-1)$ and let $ u$ the
solution of
$$
\left\{
\begin{aligned}
&  \Delta u = 0, \qquad && \mbox{in } \Omega \\
&  u = 0 , \qquad && \mbox{on } D \\
&  \frac { \partial u } { \partial \nu } = f_N , \qquad && \mbox{on } N\\
&  \nabla u ^ * \in L^ { q_0} (\partial \Omega)
\end{aligned}
\right.
$$
According to Theorem 1.2 of  Ott and Brown \cite{MR3042705}
 there is an index $q_0$ with $1<q_0<q$ for which we can solve this
boundary value problem and find $u$.

We fix a surface cube $Q_0 \subset \partial \Omega$ and suppose $
\partial \Omega$ is given by a graph in $ 2 Q_0$. 
We will show that
\begin{equation}\label{John}
  \int _ { Q_0} M(\nabla u ^ *)^ q \, d\sigma \leq C \int _ {
    \partial \Omega} |f_N|^q \, d\sigma.
\end{equation}
If we cover $ \partial \Omega$ by a finite collection of surface
cubes and sum the resulting estimates we will obtain an estimate 
 for the maximal function $
\nabla u ^ *$.

Thus, we turn to the proof of \eqref{John}. To verify the hypotheses
of 
  Theorem \ref{Shen},  we  fix  a cube $Q \subset Q_0$ and define $v$ and $w$
in $\Omega$ as the solutions of the boundary value problems
\begin{align*}
  \left\{
  \begin{aligned}
  &  \Delta v = 0 , \qquad && \mbox{in } \Omega \\
&    v=0 , \qquad && \mbox{on } D \\
&    \frac { \partial v } {\partial \nu} =g , \qquad &&\mbox{on }N\\
 &   \nabla v ^ * \in L^ { q_0}(\partial \Omega ) 
  \end{aligned}
  \right.
 \qquad  &
  \left\{
    \begin{aligned}
 &     \Delta w = 0 , \qquad &&\mbox{in } \Omega \\
&    w=0 , \qquad && \mbox{on } D \\
 &   \frac { \partial w } {\partial \nu} =h , \qquad &&\mbox{on }N\\
  &  \nabla w ^ * \in L^ { q_0}(\partial \Omega )       
  \end{aligned}
    \right.
\end{align*}
where $ g = \chi _ { 2Q} f_N$ and $ h = f_N -g$. In preparation for
using Theorem \ref{Shen}, we put $F= M(\nabla u^*)$, $ F_Q = M(\nabla v ^ *)$ and
$R_Q = M(\nabla w ^*)$.  By uniqueness for the $L^
{q_0}$-mixed problem \cite[Theorem 5.1]{MR3042705},  we have that $u = v
+w$ and it follows that \eqref{ZS0} holds on $Q$.

To prove \eqref{ZS1} 
we use Theorem 7.7 of Ott and Brown \cite{MR3042705} and the
Hardy-Littlewood maximal Theorem  to conclude that
\begin{equation}
\int _ {\partial \Omega}  M(\nabla v ^*) ^ {  q_0} \, d\sigma
\leq C \int _ { 2Q} |f_N|^ { q_0}\, d\sigma \label{E65}
\end{equation}
The estimate \eqref{ZS1} follows easily from \eqref{E65}. 

\note{Need to check out dilating of cubes, etc.

Give context for r etc.}
The  proof of  \eqref{ZS2} will require a bit more work. We begin by
choosing $ r > c \diam (Q)$ so that if $ x \in Q$, then $ \sball x
{4r} \subset 2Q$. This will be the value of $r$ we use in defining our
truncated maximal functions. We claim that for $ q _ 1 < n / ( n-1)$, we have
\begin{equation}
  \label {NS}
  \left ( \average _ { \sball x r } M ( \nabla w ^ *) ^ { q_1} \,
  d\sigma \right ) ^ { 1/q_1}
  \leq C \left[ \average _ { \sball x { 4r}} M ( \nabla u ^ * ) \, d\sigma
      + \left( \average _ {2 Q}  |f_N |^ { q_0 } \, d\sigma \right) ^
      { 1/q_0}\right ]
\end{equation}
We may obtain \eqref{ZS2} from \eqref{NS} by covering $Q$ with a
finite number of surfaces balls. 

To prove \eqref{NS} we begin by observing that 
$$
M(\nabla w ^ *) \leq M_ \infty(\nabla w ^ * ) + M_0 ( \nabla w  ^
\triangledown ) + M_0 ( \nabla w ^ { \vartriangle}) . 
$$
According to Lemma \ref{Minf}  we have
\begin{equation} \label{MT.A}
   M _ \infty (\nabla  w^ * )(y) \leq C \average _ {
    \sball x r } M ( \nabla w ^ * ) \, d\sigma , \qquad y \in  \sball x r.
\end{equation}
The estimate
\begin{equation}\label{MT.B}
  \left ( \average _{\sball x r } M_0 ( \nabla w ^ \triangledown)^ {
    q_1} \, d\sigma\right ) ^ { 1/q_1}
  \leq C \average_{\sball x r } M( \nabla w ^ *)\, d\sigma
\end{equation}
follows from Lemma \ref{LocMaxFcn}.

To estimate $M( \nabla w ^ \vartriangle )  $ we use Lemma \ref{Minf}
and Lemma \ref{Far} to conclude that
$$
\nabla w ^ \vartriangle  (y) \leq C \average _ {\sball x {2r}} M _
\infty ( \nabla w ^ * ) \, d\sigma , \qquad y \in \sball x { 2r} .
$$
From this, we conclude that
\begin{equation} \label{MT.C}
  M _ 0 ( \nabla w ^ \vartriangle ) (y)
  \leq \average _ { \sball x { 2r} } M ( \nabla w ^ * )\, d\sigma,
  \qquad y \in \sball x { 2r} .
\end{equation}
Combining (\ref{MT.A}-\ref{MT.C}), we conclude 
\begin{equation}\label{MT.D}
 \left ( \average _{ \sball x r } M ( \nabla w ^ * ) ^ { q _1} \, d\sigma\right )
  ^ { 1/q_1}
  \leq C\left [ \average _{ \sball x { 4r}} M ( \nabla w ^ * ) \, d\sigma
    + \left( \average _ {2Q} |f_N|^ { q_0} \, d\sigma\right ) ^ {
      1/q_0}\right ]. 
\end{equation}
Since $w= u-v$, it follows that $ M( \nabla w ^ *) \leq M ( \nabla u
^*) + M ( \nabla v ^ *) $ we may use \eqref{E65} to obtain
\begin{equation}\label{MT.E}
  \average _ { \sball x { 4r}} M ( \nabla w ^ *) \, d\sigma
  \leq C \left[ \average  _ { \sball x {4r}}  M ( \nabla u^ *) \, d\sigma
    + \left( \average _ {2 Q} |f_N|^ { q_0 } \, d\sigma\right ) ^ { 1/q_0}\right ] .
\end{equation}
The estimate \eqref{NS} follows from \eqref{MT.D} and \eqref{MT.E}.
\end{proof}

We close with several questions for further investigation.

\begin{enumerate}
  \item As observed above, the range of exponents is sharp when the
    dimension $n=2$. In dimensions $n \geq 3$, there is a gap between
    the result of Theorem \ref{Main} and our example (extended to
    higher dimensions by adding extra variables). 

    \item In principle, the argument here should extend to
      systems. However, we have not written out the details.
\end{enumerate}

\newpage
\bibliographystyle{plain}


\def\cprime{$'$} \def\cprime{$'$} \def\cprime{$'$} \def\cprime{$'$}
  \def\cprime{$'$} \def\cprime{$'$} \def\cprime{$'$} \def\cprime{$'$}
  \def\cprime{$'$} \def\cprime{$'$}

\end{document}